\newtheorem{theorem}{Theorem}[section]
\newtheorem{lemma}[theorem]{Lemma}
\newtheorem*{claim*}{Claim}
\theoremstyle{definition}
\newtheorem*{qu*}{Question}
\theoremstyle{remark}
\newcommand\N{\mathbb{N}}
\newcommand\R{\mathbb{R}}
\newcommand\Z{\mathbb{Z}}
\newcommand\Ex{\mathbb{E}}
\newcommand\E{\operatorname{\mathbb{E}}}
\newcommand\U{\mathcal{U}}
\newcommand\cU{\mathcal{U}}
\renewcommand\Pr{\operatorname{\mathbb{P}}}
\renewcommand\leq{\leqslant}
\renewcommand\geq{\geqslant}
\renewcommand\le{\leqslant}
\renewcommand\ge{\geqslant}
\renewcommand\to{\rightarrow}
	\def\R{\mathbb{R}}
	\def\Z{\mathbb{Z}}
	\def\N{\mathbb{N}}
	\def\1{\mathbbm{1}}
	\def\<{\langle }
	\def\>{\rangle }
	\DeclareMathOperator{\inte}{int}
	\DeclareMathOperator{\unif}{Unif}
	\def\vol{\mathrm{Vol}}
\begin{document}

\title{Towards Hadwiger's conjecture via Bourgain Slicing}
\author{Marcelo Campos \and Peter van Hintum \and Robert Morris \and Marius Tiba}

\address{IMPA, Estrada Dona Castorina 110, Jardim Bot\^anico,
Rio de Janeiro, 22460-320, Brazil}\email{marcelo.campos@impa.br}

\address{Mathematical Institute, University of Oxford, Radcliffe Observatory Quarter, Woodstock Road, Oxford, OX2 6GG, UK}\email{peter.vanhintum@new.ox.ac.uk}

\address{IMPA, Estrada Dona Castorina 110, Jardim Bot\^anico,
Rio de Janeiro, 22460-320, Brazil}\email{rob@impa.br}

\address{IMPA, Estrada Dona Castorina 110, Jardim Bot\^anico,
Rio de Janeiro, 22460-320, Brazil}\email{mt576@cam.ac.uk}

\thanks{The first author was partially supported by CNPq, and the third author was partially supported by FAPERJ (Proc.~E-26/202.993/2017) and CNPq (Proc.~303681/2020-9)}

\begin{abstract}
In 1957, Hadwiger conjectured that every convex body in $\R^d$ can be covered by $2^d$ translates of its interior. For over 60 years, the best known bound was of the form $O(4^d \sqrt{d} \log d)$, but this was recently improved by a factor of $e^{\Omega(\sqrt{d})}$ by Huang, Slomka, Tkocz and Vritsiou. In this note we take another step towards Hadwiger's conjecture by deducing an almost-exponential improvement from the recent breakthrough work of Chen, Klartag and Lehec on Bourgain's slicing problem. More precisely, we prove that, for any convex body $K \subset \R^d$, 
$$\exp\bigg( - \Omega\bigg( \frac{d}{(\log d)^8} \bigg) \bigg) \cdot 4^d$$
translates of $\inte(K)$ suffice to cover $K$. We also show that a positive answer to Bourgain's slicing problem would imply an exponential improvement for Hadwiger's conjecture.
\end{abstract}
	
\maketitle 

\section{Introduction}

Hadwiger's covering problem asks: how many translates of the interior of a convex body $K \subset \R^d$ are needed to cover $K$? That is, it asks for the value of
$$N(K) = \min\Big\{ N \in \N \,:\, \exists \, x_1,\ldots, x_N \in \R^d \,\text{ such that }\, K \subset \, \bigcup_{i=1}^N \big( x_i + \inte(K) \big) \Big\}.$$  
Hadwiger~\cite{H57} conjectured in 1957 that $N(K) \leq 2^d$ for all convex $K \subset \R^d$. Note that this bound is attained by the cube $[0,1]^d$. The conjecture was proved when $d \le 2$ by Levy~\cite{L55} in 1955, but for over 60 years the best known bound for general $d$ was 
$$N(K) \, \le \, \big( d\log d+ d\log \log d+ 5d \big) \binom{2d}{d} \, = \, O\big( 4^d \sqrt{d} \log d \big),$$ 
which follows from the Rogers--Shephard inequality~\cite{RS}, together with a bound of Rogers~\cite{R57} on the minimum density of a covering of $\R^d$ with translates of $K$. A few years ago, however, a breakthrough was made by Huang, Slomka, Tkocz and Vritsiou~\cite{HSTV}, who used a large deviation result of Guédon and Milman \cite{GM}, which is related to the so-called `thin-shell' phenomenon (see below), to obtain a bound of the form  
\begin{equation}\label{eq:HSTV}
N(K) \, \le \, e^{-\Omega(\sqrt{d})} \cdot 4^d.
\end{equation}
Here we will prove the following almost-exponential improvement of their bound. 

\begin{theorem}\label{cor:hadwiger}
If $K \subset \R^d$ is a convex body, then 
\[
N(K) \leq \exp\bigg( - \Omega\bigg( \frac{d}{(\log d)^8} \bigg) \bigg) \cdot 4^d
\]
as $d \to \infty$. 
\end{theorem}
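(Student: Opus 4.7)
The plan is to combine the covering framework of Huang, Slomka, Tkocz and Vritsiou~\cite{HSTV} with the polylogarithmic thin-shell bound that follows from the recent breakthrough of Chen, Klartag and Lehec on Bourgain's slicing problem. Since $N(K)$ is affine invariant, I may assume that $K$ is in isotropic position and that $X$ is uniformly distributed on $K$. The Chen--Klartag--Lehec work gives polylogarithmic-in-$d$ bounds on the isotropic, KLS, and thin-shell constants, from which one extracts a large-deviation estimate of the form
\[
\Pr\Big( \big| \|X\|_2 - \sqrt{d} \big| \geq \delta \sqrt{d} \Big) \leq \exp\!\bigg( - \Omega\bigg( \frac{ d \delta^2}{(\log d)^{O(1)}} \bigg) \bigg),
\]
valid for an appropriate range of $\delta$. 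This is the quantitative replacement for the Guédon--Milman bound $\exp(-\Omega(\sqrt{d}))$ used in~\cite{HSTV} to obtain~\eqref{eq:HSTV}, and it delivers essentially Gaussian concentration at the relevant scale.

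Next I would implement an HSTV-style covering. Fix a scaling factor $t = 1 - \eta$ with $\eta > 0$ small so that $tK \subset \inte(K)$, and cover $K$ by translates $x_i + tK$ whose centers $x_i$ lie in the difference body $K - tK$. The Rogers--Shephard inequality controls $\vol(K - tK)/\vol(tK) \leq t^{-d}\binom{2d}{d}$, contributing the $4^d$ factor. The thin-shell estimate above allows the relevant centers to be confined to a thin annulus of relative width $\delta$ inside $K - tK$, whose volume is smaller by a factor $\exp(-\Omega(d\delta))$; an exceptional set of points of $K$ where the annulus is missed is handled separately, contributing at most the shell-probability. A Rogers-type random covering argument as in~\cite{HSTV} then controls $N(K)$ by this reduced volume ratio times a polynomial covering-density factor of order $d \log d$.

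Finally one balances the two losses — the shell probability $\exp(-\Omega(d \delta^2/(\log d)^{O(1)}))$ against the volume saving $\exp(-\Omega(d\delta))$. The optimal choice of $\delta$ is of order $(\log d)^{-O(1)}$, giving a net saving of $\exp(-\Omega(d/(\log d)^{O(1)}))$ over $4^d$. Careful tracking of all polylogarithmic factors — both in the thin-shell input from Chen--Klartag--Lehec and in the covering step — should yield the stated exponent $(\log d)^{-8}$. The main technical obstacle I anticipate is precisely this exponent bookkeeping: the Chen--Klartag--Lehec output must be imported in a form directly compatible with the HSTV framework, and the parameters $\delta$ and $\eta$ must be tuned jointly so that no superfluous polylog factor creeps in. Beyond this optimisation I do not expect serious conceptual difficulty, since the overall architecture of the argument is already in place in~\cite{HSTV}.
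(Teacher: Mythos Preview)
Your proposal rests on a large-deviation estimate that does not follow from the Chen--Klartag--Lehec work and is in fact false at the scale you need. You claim
\[
\Pr\Big( \big| \|X\|_2 - \sqrt{d} \big| \geq \delta\sqrt{d} \Big) \leq \exp\!\Big( -\Omega\big(d\delta^2/(\log d)^{O(1)}\big)\Big),
\]
but the paper notes explicitly that the Gu\'edon--Milman bound~\eqref{thm:GM} is \emph{sharp}: for the isotropic simplex one has $\Pr\big(|\|X\|_2 - \sqrt{d}| \geq c\sqrt{d}\big) = \exp(-\Theta(\sqrt{d}))$ for any fixed $c>0$, and no improvement in the KLS, thin-shell, or isotropic constant can alter this lower tail. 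What Klartag--Lehec actually yields, via Gromov--Milman concentration for $1$-Lipschitz functions with Cheeger constant $\Omega((\log d)^{-O(1)})$, is a tail of order $\exp\big(-c\,\delta\sqrt{d}/(\log d)^{O(1)}\big)$; fed into the HSTV argument this gives at best $\Delta_{KB}(K) \geq \exp\big(\Omega(\sqrt{d}/(\log d)^{O(1)})\big)\cdot 2^{-d}$, which is no improvement over~\eqref{eq:HSTV}. Your ``balancing'' step therefore cannot produce a saving of $\exp\big(-\Omega(d/(\log d)^{O(1)})\big)$.

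The paper bypasses thin-shell concentration entirely. The new idea is to replace $(X+Y)/2$ by the average $S_k = 2^{-k}\sum_{i=1}^{2^k} X_i$ of $2^k$ independent uniform points of $K$, and to prove via the log-concavity of $f_{S_k}$ the pointwise inequality $f_{S_k}(z) \leq \big(f_{(X+Y)/2}(z)\big)^{2^k-1}$ (Lemma~\ref{iteratedsumcontrol}). Choosing $2^k \asymp L_K^2$ makes the covariance of $S_k$ of order $I_d$; then a \emph{trivial} small-ball estimate (the volume of a Euclidean ball of radius $c\sqrt{d}$) together with Chebyshev on $\|S_k\|_2$ gives $\|f_{S_k}\|_\infty \geq e^{2d}$, hence $\|f_{(X+Y)/2}\|_\infty \geq e^{\Omega(d/L_K^2)}$ and $\Delta_{KB}(K) \geq \exp\big(\Omega(d/L_K^2)\big)\cdot 2^{-d}$. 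The Klartag--Lehec input enters only at the last step, as the bound $L_K = O((\log d)^4)$, which is precisely where the exponent $(\log d)^{-8}$ comes from. (Your description of the covering step is also not the HSTV route: both HSTV and this paper pass through $\Delta_{KB}$ and the symmetric body $K\cap(x-K)$, not through dilates $tK$ and annuli.)
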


We will deduce Theorem~\ref{cor:hadwiger} from recent results of Chen~\cite{C21} and Klartag and Lehec \cite{KL} on the Bourgain slicing problem, which asks for the smallest number $L_d > 0$ such that for every convex body $K \subset \R^d$ of volume $1$, there exists a hyperplane $H$ such that $K \cap H$ has $(d-1)$-dimensional volume at least $1 / L_d$. In particular, Bourgain~\cite{B86,B87} asked whether or not $L_d$ is bounded from above by an absolute constant. 
This problem is still open, and for many years the best known bound was of the form $L_d = O(d^{1/4})$, proved by Bourgain~\cite{B91,B02} (with an extra $\log$-factor) and Klartag~\cite{K06}. However, just a couple of years ago, Chen~\cite{C21} made a major breakthrough on the problem by proving a bound of the form $L_d = d^{o(1)}$. His bound was then improved further by Klartag and Lehec~\cite{KL}, who showed that $L_d = O(\log d)^4$. 

The breakthroughs in~\cite{C21} and~\cite{KL} both used ``stochastic localization", a powerful and beautiful technique that was introduced about ten years ago by Eldan~\cite{E13}, to bound the \emph{thin-shell constant}, $\sigma_d$, which is defined to be 
$$\sigma_d := \sup_{K} \, \Ex\big[ \big( \| X \|_2 - \sqrt{d} \big)^2 \big],$$
where the supremum is over convex bodies $K \subset \R^d$ in isotropic position\footnote{This means that $\Ex[X] = 0$ and $\Ex[ X \otimes X] = I_d$, where $X \sim \U(K)$ is a uniformly-chosen random point of $K \subset \R^d$, and $I_d$ is the identity matrix. For any convex body $K$ there exists a unique (up to rotations) affine transformation that maps $K$ to isotropic position.}, and $X \sim \cU(K)$ is a uniformly chosen random point of $K$. 
The thin-shell conjecture~\cite{ABP,BK} states that $\sigma_d = O(1)$, and it was shown by Eldan and Klartag~\cite{EK} that  
$$L_d = O(\sigma_d),$$
so bounds on the thin-shell constant imply bounds for the Bourgain slicing problem. We remark that, by a deep result of Eldan~\cite{E13}, bounds on the thin-shell constant also imply bounds for the Kannan--Lovász--Simonovits isoperimetric conjecture~\cite{KLS}, see e.g.~\cite{E13,LV}. 


We will use an equivalent formulation of the Bourgain slicing problem, due to Milman and Pajor~\cite{MP} (see also~\cite{KM}). Given a convex body $K \subset \R^d$, define the \emph{isotropic constant} of $K$ to be
$$L_K = \left(\frac{\sqrt{\det(\Sigma_K)}}{\vol_d (K)}\right)^{1/d},$$ 
where $\Sigma_K = \E[X \otimes X]$ is the covariance matrix of the random variable $X \sim \unif (K)$, that is, $X$ is a uniformly random point of $K$. Equivalently, there exists an affine transformation that maps $K$ to a convex body $K'$ of volume $1$ with $\Sigma_{K'} = L_K^2 I_d$, where $I_d$ is the identity matrix. By~\cite[Corollary~3.2]{MP} we have $L_K = \Theta(L_d)$ for every convex body $K \subset \R^d$, and hence $L_K = O(\log d)^4$, by the bound on $L_d$ proved by Klartag and Lehec~\cite{KL}. 

Our main result is the following bound on the covering number of a convex body. Since $L_K = O(\log d)^4$, it implies the bound in Theorem~\ref{cor:hadwiger} for Hadwiger's conjecture.

\begin{theorem}\label{main_thm}
If\/ $K \subset \R^d$ is a convex body, then 
\[
N(K) \leq \exp\bigg( - \frac{\Omega(d)}{L_K^2} \bigg) \cdot 4^d
\]
as $d \to \infty$.
\end{theorem}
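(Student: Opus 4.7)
The strategy is to follow the probabilistic covering framework of Huang--Slomka--Tkocz--Vritsiou (HSTV), but to input the recent Chen--Klartag--Lehec bound $L_d = O(\log d)^4$ in place of the Gu\'edon--Milman thin-shell estimate used there.

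First I would reduce the covering number to a Minkowski-sum volume estimate. A Rogers--Zong style covering argument---selecting a Rogers cover of $\R^d$ by translates of $(1-\epsilon) K$ with density $O(d \log d)$, then keeping only the translates that meet $K$---gives, for any $\epsilon \in (0,1)$,
\[
N(K) \;\le\; O(d \log d) \cdot (1-\epsilon)^{-d} \cdot \frac{\vol_d\big(K + (1-\epsilon)(-K)\big)}{\vol_d(K)}.
\]
For $\epsilon = 0$ this recovers the classical $O(4^d \sqrt{d}\log d)$ bound via Rogers--Shephard; the plan is to take $\epsilon = \Theta(1/L_K^2)$ and reduce the problem to proving
\[
\vol_d\big(K + (1-\epsilon)(-K)\big) \;\le\; 4^d \cdot e^{-\Omega(d/L_K^2)} \cdot \vol_d(K).
\]

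Next I would bound this Minkowski-sum volume by a probabilistic argument. Normalize $K$ to isotropic position with $\vol_d(K) = 1$, so that $\Sigma_K = L_K^2 I_d$. Consider the log-concave random vector $Z = X - (1-\epsilon)Y$ for independent $X, Y \sim \unif(K)$: its density $f_Z$ is supported exactly on $K + (1-\epsilon)(-K)$, with peak $f_Z(0) = 1/\vol_d(K) = 1$ at the origin and covariance $\big(1+(1-\epsilon)^2\big) L_K^2 I_d$. The key input would be a sub-Gaussian concentration estimate of the form
\[
\Pr\big[\|Z\|_2 \ge C \sqrt{d}\, L_K\big] \;\le\; e^{-\Omega(d/L_K^2)},
\]
which confines the bulk of the mass of $Z$ to a Euclidean ball of radius $O(\sqrt{d}\, L_K)$. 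Combined with the trivial peak bound $f_Z \le 1$ and an explicit comparison of this ball's volume against $4^d \vol_d(K)$, this would yield the desired volume estimate.

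The hard part will be the sub-Gaussian concentration step. A Chebyshev bound from the thin-shell constant gives only polynomial concentration, and Paouris's inequality yields only the sub-exponential rate $e^{-\Omega(\sqrt{d}/L_K)}$, both of which are insufficient. The sharper sub-Gaussian rate $e^{-\Omega(d/L_K^2)}$ will have to be extracted from the sharpened moment estimates underlying the stochastic-localization proofs of Chen and Klartag--Lehec of $L_d = O(\log d)^4$, and then threaded carefully through the volumetric reduction so that the exponent depends on $L_K$ in exactly the advertised way.
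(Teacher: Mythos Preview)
Your Rogers--Zong reduction is fine, but the concentration step you flag as ``the hard part'' is not merely hard: it is \emph{false}. The inequality $\Pr\big[\|Z\|_2 \ge C\sqrt{d}\,L_K\big] \le e^{-\Omega(d/L_K^2)}$ already fails for the simplex, where $L_K = \Theta(1)$, so you would be asking for $\Pr\big[\|Z\|_2 \ge C\sqrt{d}\big] \le e^{-\Omega(d)}$; but the coordinates of $Z$ have genuinely exponential tails, and this probability is only $e^{-\Theta(\sqrt{d})}$. The paper points out exactly this obstruction (the Gu\'edon--Milman bound is sharp for the simplex), and no input from Chen--Klartag--Lehec can help, since that work bounds $L_K$, a quantity that is already $\Theta(1)$ here. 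Your volumetric step is also incomplete as stated: concentration of $Z$ inside a ball, together with $f_Z\le 1$, gives a lower bound on the ball's volume, not an upper bound on $\vol_d(\operatorname{supp} f_Z)=\vol_d\big(K+(1-\eps)(-K)\big)$.

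The paper circumvents the obstruction by abandoning upper-tail concentration altogether. It replaces $(X+Y)/2$ by the average $S_k$ of $2^k \asymp L_K^2$ independent uniform points of $K$, so that $\operatorname{Cov}(S_k)=2^{-k}L_K^2 I_d$ has bounded entries and a plain \emph{Markov} inequality already gives $\Pr\big[\|S_k\|_2 \le c\sqrt{d}\big] \ge 1/2$. Comparing with the trivial small-ball bound $\Pr\big[\|X\|_2 \le c\sqrt{d}\big] \le e^{-\Omega(d)}$ yields $\|f_{S_k}\|_\infty \ge e^{\Omega(d)}$. The new idea is a short log-concavity bootstrap, $\|f_{S_k}\|_\infty \le \big\|f_{(X+Y)/2}\big\|_\infty^{\,2^k-1}$, which transfers this back to $\big\|f_{(X+Y)/2}\big\|_\infty \ge e^{\Omega(d/2^k)}=e^{\Omega(d/L_K^2)}$ and hence $\Delta_{KB}(K) \ge e^{\Omega(d/L_K^2)}\,2^{-d}$. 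The covering bound then follows from Rogers--Zong with the body $B = K\cap(x-K)$, using $|K-B|\le|K+K|=2^d|K|$ since $B\subset x-K$. Note in particular that Chen--Klartag--Lehec is \emph{not} used anywhere in the proof of this theorem; it enters only afterwards, to convert $L_K$ into $(\log d)^4$ for Theorem~\ref{cor:hadwiger}.
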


One of the key innovations of~\cite{HSTV} was a method of deducing bounds on the covering number $N(K)$ from  bounds on the Kövner--Besicovitch measure of symmetry 
$$\Delta_{KB}(K) := \max_{x \in \R^d} \frac{|K\cap (x-K)|}{|K|}\, .$$
In particular, the authors of~\cite{HSTV} improved the (straightforward, but until then best known) lower bound $\Delta_{KB}(K) \ge 2^{-d}$ by a factor of $e^{\Omega(\sqrt{d})}$, and used that bound to prove~\eqref{eq:HSTV}. We will similarly deduce Theorem~\ref{main_thm} from the following lower bound on $\Delta_{KB}(K)$. 

\begin{theorem}\label{thm:DeltaKB}
If $K \subset \R^d$ is a convex body, then 
\[
\Delta_{KB}(K) \, \geq \, \exp\bigg( \frac{d}{2^{15} L_K^2} \bigg) \cdot 2^{-d}.
\] 
\end{theorem}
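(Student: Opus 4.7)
The plan is to refine the classical ``convexity plus averaging'' bound $\Delta_{KB}(K) \ge 2^{-d}$ by a clever choice of averaging set, in the spirit of Huang--Slomka--Tkocz--Vritsiou~\cite{HSTV}, but replacing their Guédon--Milman thin-shell input with the Bourgain slicing parameter $L_K$. By the affine invariance of $\Delta_{KB}$, I may assume $K$ is in isotropic position: $|K|=1$, $\Ex X = 0$, $\Sigma_K = L_K^2 I_d$ for $X \sim \unif(K)$; in particular $\Ex\|X\|_2^2 = dL_K^2$. The starting point is the identity, valid for any measurable $A \subseteq \R^d$ and derived by expanding $|K \cap (2z-K)| = \int \mathbbm{1}_K(y)\mathbbm{1}_K(2z-y)\,dy$ and changing variables $y \mapsto 2z - y$:
\[
\int_A |K \cap (2z - K)|\,dz \;=\; 2^{-d}\int_K\int_K \mathbbm{1}_A\!\big(\tfrac{x+y}{2}\big)\,dx\,dy \;=\; 2^{-d}\cdot\beta,
\]
where $\beta := \Pr_{X,Y\sim\unif(K)\text{ iid}}[(X+Y)/2 \in A]$ and $\alpha := |A|$. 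Using $\max \ge$ average over $A$, I get $\Delta_{KB}(K) \ge 2^{-d}\beta/\alpha$. The choice $A = K$, using convexity (so $(X+Y)/2 \in K$ always), gives $\alpha = \beta = 1$ and recovers $\Delta_{KB}(K) \ge 2^{-d}$.

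To improve this baseline by a factor of $e^{\Omega(d/L_K^2)}$, I take $A = \{z \in K : \|z\|_2^2 \le r^2\}$ with $r$ slightly above $L_K\sqrt{d/2}$. The point is that $\Ex\|X\|_2^2 = dL_K^2$ while $\Ex\|(X+Y)/2\|_2^2 = dL_K^2/2$, so $A$ sits ``well below the thin-shell'' for $X$ but ``near the typical value'' for $\|(X+Y)/2\|_2$. By Chebyshev applied to $\|(X+Y)/2\|_2^2$ (using the standard thin-shell bound $\operatorname{Var}(\|X\|_2^2) = O(dL_K^4)$ for isotropic log-concave $X$), we obtain $\beta \ge 1/2$. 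On the other hand, by a quantitative small-ball estimate
\[
\Pr\big[\|X\|_2^2 \le \tfrac{1}{2}\,dL_K^2\big] \le \exp(-cd/L_K^2)
\]
for an absolute constant $c>0$, we get $\alpha \le e^{-cd/L_K^2}$. Combining, $\Delta_{KB}(K) \ge 2^{-(d+1)} \exp(cd/L_K^2)$, and absorbing the factor of $1/2$ by tuning $c$ yields the stated bound with constant $2^{15}$.

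The technical heart of this approach is the small-ball bound displayed above, which is a Gaussian-type concentration for $\|X\|_2^2$ about its mean $dL_K^2$ with the ``correct'' $L_K$-dependence in the exponent. It should follow from a moment generating function bound for $\|X\|_2^2$ derived through Eldan's stochastic localization~\cite{E13} and its refinements by Chen~\cite{C21} and Klartag--Lehec~\cite{KL}, combined with the Milman--Pajor equivalence $L_K = \Theta(L_d)$ already recorded in the paper (which lets us freely interchange $L_K$ with $L_d$ or $\sigma_d$ up to constants). The main remaining difficulty is quantitative: tracking numerical constants carefully down to $2^{15}$, which is delicate but routine.
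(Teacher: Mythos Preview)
Your framework is correct up to the choice of the test set $A$, but both probability estimates you then invoke are gaps, not routine steps.

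For $\beta \ge 1/2$ you appeal to ``the standard thin-shell bound $\operatorname{Var}(\|X\|_2^2) = O(dL_K^4)$''. In covariance-identity normalization this reads $\operatorname{Var}(\|Y\|_2^2) = O(d)$, which is precisely the thin-shell conjecture; what is actually known (via Klartag--Lehec) is a bound in terms of $\sigma_d$, not in terms of the individual body's $L_K$, and there is no known inequality of the form $\sigma_K \le C L_K$.

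More seriously, the small-ball estimate $\Pr\big[\|X\|_2^2 \le \tfrac{1}{2}dL_K^2\big] \le \exp(-cd/L_K^2)$ that you call ``the technical heart'' is not a theorem you can cite. Paouris's small-ball inequality gives exponent $c\sqrt d$ at best for a fixed fraction of the mean, and the paper itself notes that the Gu\'edon--Milman deviation bound $e^{-\Omega(\sqrt d)}$ is sharp for the simplex. The Chen and Klartag--Lehec results bound $\sigma_d$ and $L_d$; they do not directly produce a small-ball probability of this shape, and ``freely interchanging $L_K$ with $\sigma_d$'' does not help here since the implication $L_d \le C\sigma_d$ goes the wrong way.

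The paper's proof sidesteps both difficulties with an idea you are missing entirely: replace $(X+Y)/2$ by $S_k = 2^{-k}\sum_{i=1}^{2^k} X_i$ with $2^k \asymp L_K^2$. Then $\Ex\|S_k\|_2^2 = 2^{-k} d L_K^2 = O(d)$, so plain Markov gives $\Pr[\|S_k\|_2 \le 2^{-7}\sqrt d] \ge 1/2$, while the \emph{trivial} volume bound (using $|K|=1$) gives $\Pr[\|X\|_2 \le 2^{-7}\sqrt d] \le |B(0,2^{-7}\sqrt d)| \le e^{-2d}$. The transfer back to $(X+Y)/2$ is a short log-concavity lemma,
\[
f_{S_k}(z) \;\le\; \big(f_{\frac{X+Y}{2}}(z)\big)^{2^k - 1},
\]
proved by induction on $k$. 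Thus no thin-shell or small-ball input is needed at all; the $L_K$-dependence enters solely through the choice of $k$.
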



In addition to the application to Hadwiger's conjecture described above, our method 
also has an application to the geometry of numbers. To be precise, Ehrhart~\cite{E64} conjectured in 1964 that a convex body in $\R^d$ centred at the origin\footnote{We say that a convex body $K \subset \R^d$ is \emph{centred} at its centre of mass $\Ex[X]$, where $X \sim \U(K)$.} 
whose interior contains no lattice point other than the origin has volume at most $(d+1)^d / d!$ (this bound is attained by a simplex). The best-known upper bound for the volume of $K$ is of the form $e^{-\Omega(\sqrt{d})} \cdot 4^d$, obtained by Huang, Slomka, Tkocz and Vritsiou~\cite{HSTV}. We will use the bound on $L_K$ proved by Klartag and Lehec~\cite{KL} to deduce the following almost-exponential improvement of their bound. 

\begin{theorem}\label{cor:ehrhart}
Let $K \subset \R^d$ be a convex body centred at the origin. If $K \cap \Z^d = \{0\}$, then 
\[
|K| \leq \exp\bigg( - \Omega\bigg( \frac{d}{(\log d)^8} \bigg) \bigg) \cdot 4^d
\]
as $d \to \infty$.
\end{theorem}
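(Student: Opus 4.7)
The plan is to combine Minkowski's first theorem with a lower bound on $|K \cap (-K)|/|K|$, in the spirit of the earlier bound of~\cite{HSTV}. The improvement over~\cite{HSTV} will come entirely from replacing their thin-shell input by the Chen--Klartag--Lehec bound $L_K = O((\log d)^4)$.

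First I would apply a linear transformation $T$ to bring $K$ into isotropic position. Since $K$ is centred at the origin, $T$ preserves the origin; it sends $\Z^d$ to a lattice $\Lambda := T(\Z^d)$ of covolume $|\det T|$, in which $TK$ remains lattice-free apart from the origin, and it leaves the affine-invariant quantities $|K|/\det\Lambda$ and $L_K$ unchanged. The body $TK \cap (-TK)$ is centrally symmetric about the origin and contained in $TK$, so it contains no non-zero point of $\Lambda$; Minkowski's first theorem therefore gives $|TK \cap (-TK)| \leq 2^d \det\Lambda$.

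For the complementary lower bound I would invoke
$$\frac{|TK \cap (-TK)|}{|TK|} \, \geq \, \exp\bigg(\frac{d}{2^{15} L_K^2}\bigg) \cdot 2^{-d},$$
which is the statement of Theorem~\ref{thm:DeltaKB} specialised to $x = 0$. The left-hand side equals $|TK|$ times the density at the origin of $X + Y$ for independent $X, Y \sim \unif(TK)$, and this density is exactly the quantity that the stochastic localisation arguments of~\cite{C21, KL} control for an isotropic body. Combining the upper and lower bounds on $|TK \cap (-TK)|$ and using $|TK| = \det\Lambda \cdot |K|$ yields
$$|K| \, \leq \, 4^d \cdot \exp\bigg(-\frac{d}{2^{15} L_K^2}\bigg),$$
and the claimed bound follows on substituting $L_K = O((\log d)^4)$.

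The main subtlety---and the step I expect to require the most care---is passing from the statement of Theorem~\ref{thm:DeltaKB}, which is phrased as a maximum over $x$, to the value at $x = 0$. For a body centred at the origin this maximum need not be attained at $x = 0$, so one cannot simply substitute $\Delta_{KB}(K)$ into the Minkowski step: the intersection $K \cap (x - K)$ is centrally symmetric about $x/2$, and the condition $K \cap \Z^d = \{0\}$ forces $x/2$ to be a lattice point---hence zero---precisely when $x = 0$. The resolution is to read the proof of Theorem~\ref{thm:DeltaKB} as already producing the $x = 0$ estimate in isotropic position, which is the natural output of stochastic localisation at the centroid.
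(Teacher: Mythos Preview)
Your overall architecture---Minkowski's theorem applied to the origin-symmetric body $K\cap(-K)$, combined with a lower bound on $|K\cap(-K)|/|K|$ in terms of $L_K$, then substituting the Klartag--Lehec bound---is exactly the paper's route. The affine transformation $T$ is harmless but unnecessary, since both $|K\cap(-K)|/|K|$ and $L_K$ are affine invariants.

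The gap is in the step you yourself flag as the delicate one. You assert that ``the proof of Theorem~\ref{thm:DeltaKB} already produces the $x=0$ estimate in isotropic position,'' but it does not. That proof bounds only the supremum $\|f_{\frac{X+Y}{2}}\|_\infty$: it shows $\|f_{S_k}\|_\infty\ge e^{2d}$ via a small-ball/Markov comparison, and then uses Lemma~\ref{iteratedsumcontrol} to pass to $\|f_{\frac{X+Y}{2}}\|_\infty$. Nothing in that argument locates the maximiser at the centroid, and for asymmetric $K$ the maximum of $|K\cap(x-K)|$ is genuinely not attained at $x=0$ in general. Your appeal to ``stochastic localisation at the centroid'' is not a substitute: the Chen--Klartag--Lehec input enters only through the bound on $L_K$, not through any pointwise density estimate.

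What is actually required---and what the paper supplies in Theorem~\ref{thm:centred}---is an additional ingredient: Fradelizi's inequality (Theorem~\ref{thm:centre}) that a log-concave density $f$ satisfies $f(y)\ge e^{-d}\|f\|_\infty$ at its centre of mass $y$. Since $f_{S_k}$ is log-concave with barycentre $0$, this gives $f_{S_k}(0)\ge e^{-d}\cdot e^{2d}=e^d$, and then the \emph{pointwise} form of Lemma~\ref{iteratedsumcontrol} (not just the $\|\cdot\|_\infty$ version) transfers this to $f_{\frac{X+Y}{2}}(0)$. The paper explicitly remarks that this is why the pointwise statement of that lemma is needed. Note too that the $e^{-d}$ loss halves the exponent, so the correct constant at $x=0$ is $2^{-16}$ rather than the $2^{-15}$ you wrote; this discrepancy is a symptom of the missing step.
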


In order to prove Theorem~\ref{cor:ehrhart}, we will need a variant of Theorem~\ref{thm:DeltaKB} that provides a similar lower bound on the ratio $|K \cap (-K)| / |K|$ (see Theorem~\ref{thm:centred}). The application of such bounds to Ehrhart’s conjecture was first observed by Henk, Henze and Hernández Cifre~\cite{HHH}, who used the bound $|K \cap (-K)| / |K| \ge 2^{-d}$, due to Milman and Pajor~\cite{MP00}, together with Minkowski’s theorem, to prove an upper bound of $4^d$ for Ehrhart’s conjecture. 

The rest of this note is organised as follows. In Section~\ref{sec:KB} we will prove Theorem~\ref{thm:DeltaKB}, in Section~\ref{sec:Had} we will deduce Theorems~\ref{cor:hadwiger} and~\ref{main_thm}, and in Section~\ref{sec:Ehr} we will prove Theorem~\ref{cor:ehrhart}.

\section{Bounding the Kövner--Besicovitch measure}\label{sec:KB}

{\setstretch{1.2}

One of the key ideas introduced in~\cite{HSTV} was that a lower bound on $\Delta_{KB}(K)$ can be obtained by considering the maximum density of the random variable $X+Y$, where $X$ and $Y$ are independent uniform elements of $K$. More precisely, they made the following observation. We write $f_X$ for the probability density function of a random variable $X$. 

\begin{lemma}\label{lem:XY:obs}
Let $K \subset \R^d$ be a convex body of volume $1$, and let $X$ and $Y$ be independent uniformly-chosen random elements of $K$. Then, for any $z \in K$, 
$$f_{\frac{X+Y}{2}}(z) = 2^d \cdot \big| K \cap \big( 2z - K \big) \big|.$$
\end{lemma}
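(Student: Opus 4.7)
The plan is a direct convolution-plus-rescaling computation, so I do not expect any genuine obstacle; the main thing to be careful about is the Jacobian factor $2^d$ coming from the rescaling by $1/2$.

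First I would write down the density of $X$ explicitly: since $X \sim \unif(K)$ and $|K| = 1$, we have $f_X = \mathbbm{1}_K$, and likewise $f_Y = \mathbbm{1}_K$. Independence of $X$ and $Y$ then gives $f_{X+Y}$ as the convolution $f_X * f_Y$, so for any $w \in \R^d$,
\[
f_{X+Y}(w) \;=\; \int_{\R^d} \mathbbm{1}_K(u)\,\mathbbm{1}_K(w-u)\,du \;=\; \bigl|\{u : u \in K \text{ and } w - u \in K\}\bigl| \;=\; \bigl| K \cap (w - K) \bigr|.
\]
This is the usual interpretation of the convolution of two indicator functions as a sectional volume.

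Next I would use the fact that if $W = (X+Y)/2$, then the pushforward under the map $w \mapsto w/2$ introduces a factor of $2^d$ in the density: $f_W(z) = 2^d f_{X+Y}(2z)$. Substituting $w = 2z$ in the displayed identity yields
\[
f_{\frac{X+Y}{2}}(z) \;=\; 2^d \cdot \bigl| K \cap (2z - K) \bigr|,
\]
which is exactly the claim. Note that the hypothesis $z \in K$ is not actually needed for the identity; it is stated this way presumably because that is the regime in which the lemma will be applied later.

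The only point deserving a sentence of justification is the scaling factor. One can either invoke it directly via the change-of-variables formula for pushforward densities, or derive it by computing $\Pr\bigl[(X+Y)/2 \in A\bigr] = \Pr[X+Y \in 2A] = \int_{2A} f_{X+Y}(w)\,dw = 2^d \int_A f_{X+Y}(2z)\,dz$ for any Borel set $A$. Either way, this is standard, and no further ideas are required.
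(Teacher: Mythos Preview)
Your proof is correct and essentially identical to the paper's: both combine the scaling identity $f_{\frac{X+Y}{2}}(z) = 2^d f_{X+Y}(2z)$ with the evaluation of the convolution $f_X * f_Y$ as $|K \cap (2z-K)|$, differing only in the order these two steps are presented. Your remark that the hypothesis $z \in K$ is not actually needed for the identity is also accurate.
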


\begin{proof}
Observe first that
$$2^{-d} \cdot f_{\frac{X + Y}{2}}(z) \, = \, f_{X+Y}(2z)\, = \, \int_{x \in \mathbb{R}^d}f_{X}(x)f_{Y}(2z-x) \, dx.$$
Now simply note that
\begin{align*}
\int_{x\in \mathbb{R}^d} f_{X}(x)f_{Y}(2z-x) \, dx & \, = \, \int_{x\in \mathbb{R}^d}\1 \big[ x\in K \big] \1\big[ 2z - x \in K \big] \, dx\\
& \, = \, \int_{x\in \mathbb{R}^d}\1\big[ x\in K \cap (2z-K) \big] \, dx\, = \, |K\cap (2z-K)|,
\end{align*}
as claimed.
\end{proof}

It follows immediately from Lemma~\ref{lem:XY:obs} that if $|K| = 1$, then
\begin{equation}\label{eq:basic:bound}
\Delta_{KB}(K) \, \geq \, 2^{-d} \cdot \big\| f_{\frac{X+Y}{2}} \big\|_\infty \, \geq \, 2^{-d} \cdot \frac{\Pr\big(\frac{X+Y}{2} \in A \big)}{\Pr(X \in A)} 
\end{equation}
for any measurable set $A \subset \R^d$. In order to prove their lower bound on $\Delta_{KB}(K)$, the authors of~\cite{HSTV} observed that the random variable $\big\| \frac{X+Y}{2} \big\|_2$ is typically about $\sqrt{2}$ times smaller than $\| X \|_2$, and applied the inequality~\eqref{eq:basic:bound} to a ball $A$ with radius halfway between these two typical values. They then used a `thin-shell' theorem of Guédon and Milman~\cite{GM}, which implies that if $K \subset \R^d$ is a convex body in isotropic position 
then, for any fixed $c > 0$, 
\begin{equation}\label{thm:GM}
\Pr\Big( \big| \|X\|_2 - \sqrt{d} \big| \ge c \sqrt{d} \Big) \le \exp\Big( - \Omega\big( \sqrt{d} \big) \Big),
\end{equation}
to deduce that $\Pr\big(\frac{X+Y}{2} \in A \big) \approx 1$ and $\Pr(X \in A) \le e^{-\Omega(\sqrt{d})}$ for this set $A$, giving their bound 
$$\Delta_{KB}(K) \geq e^{\Omega(\sqrt{d})} \cdot 2^{-d}.$$

The 
Guédon--Milman bound~\eqref{thm:GM} is best possible 
(to see this, consider the simplex), so it may seem at first sight that there is not much hope of using the method of~\cite{HSTV} to prove a significantly stronger lower bound on $\Delta_{KB}(K)$. In order to do so, we will replace the thin-shell estimate~\eqref{thm:GM} by a `small-ball' bound which depends on $L_K$, and the random variable $X + Y$ by a sum of arbitrarily many independent random variables. 
}
{\setstretch{1.13}

To be more precise, let $X_1,X_2,\ldots$ be a sequence of independent random variables, each chosen uniformly at random from the set $K$, and for each $k \in \N$, define 
\begin{equation}\label{def:Sk}
S_k := \frac{1}{2^k} \sum_{i = 1}^{2^k} X_i.
\end{equation}
Since $K$ is convex, it follows from the Prékopa--Leindler inequality that $f_{S_k}$ is log-concave. 
 
The key step is the following lemma, which bounds $f_{S_k}(z)$ in terms of $f_{\frac{X+Y}{2}}(z)$.

\begin{lemma}\label{iteratedsumcontrol}
For any convex body $K \subset \R^d$ with volume $1$, we have
$$f_{S_k}(z) \, \leq \, \Big( f_{\frac{X+Y}{2}}(z) \Big)^{2^k-1}$$
for all $z \in \R^d$ and every $k \in \N$.
\end{lemma}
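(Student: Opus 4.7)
The plan is to proceed by induction on $k$, writing $g := f_{\frac{X+Y}{2}}$ for brevity. The base case $k=1$ is essentially the definition: $S_1 = (X_1 + X_2)/2$ has density exactly $g$, and $2^1 - 1 = 1$, so the claim reduces to $g(z) \leq g(z)$.

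For the inductive step, the crucial structural observation is that $S_{k+1} = (Y_1 + Y_2)/2$, where $Y_1, Y_2$ are independent copies of $S_k$. Exactly the same calculation as in Lemma~\ref{lem:XY:obs}, applied to the density $f_{S_k}$ rather than to $\1_K$, yields the convolution identity
$$f_{S_{k+1}}(z) \, = \, 2^d \int f_{S_k}(x) \, f_{S_k}(2z - x) \, dx.$$
Combining the inductive hypothesis $f_{S_k} \leq g^{2^k - 1}$ with log-concavity of $g$ (a consequence of Pr\'ekopa--Leindler, since $g(z) = 2^d |K \cap (2z - K)|$ arises from a convolution of indicators of the convex body $K$), I would control the integrand pointwise by
$$f_{S_k}(x) \, f_{S_k}(2z-x) \, \leq \, \big( g(x) g(2z-x) \big)^{2^k - 1} \, \leq \, g(z)^{2(2^k - 1)} \, = \, g(z)^{2^{k+1} - 2},$$
where the second inequality uses the log-concavity bound $g(x) g(2z-x) \leq g(z)^2$.

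This pointwise bound is not integrable on all of $\R^d$, but since $S_k$ is a convex combination of points of $K$, the density $f_{S_k}$ is supported on $K$, and so the integrand vanishes unless $x \in K \cap (2z-K)$. Restricting the integral accordingly gives
$$f_{S_{k+1}}(z) \, \leq \, 2^d \, g(z)^{2^{k+1} - 2} \, \big| K \cap (2z - K) \big|.$$
A final application of Lemma~\ref{lem:XY:obs} (using $|K|=1$) rewrites $|K \cap (2z-K)| = g(z)/2^d$; the two factors of $2^d$ cancel, and the induction closes with $f_{S_{k+1}}(z) \leq g(z)^{2^{k+1}-1}$.

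The main conceptual obstacle is that log-concavity of $g$ alone only delivers a pointwise bound with exponent $2^{k+1}-2$, one shy of the required $2^{k+1} - 1$. The missing factor of $g(z)$ must be recovered from what one might call a support trick: the volume $|K \cap (2z-K)|$ of the effective domain of integration contributes precisely the needed additional power of $g$, again via Lemma~\ref{lem:XY:obs}.
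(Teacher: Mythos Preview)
Your proof is correct and follows essentially the same induction scheme as the paper: the convolution identity for $f_{S_{k+1}}$, a log-concavity bound on the integrand, restriction to the support $K\cap(2z-K)$, and Lemma~\ref{lem:XY:obs} to recover the missing factor of $g(z)$. The only cosmetic difference is that the paper applies log-concavity to $f_{S_k}$ itself (obtaining $f_{S_k}(x)f_{S_k}(2z-x)\le f_{S_k}(z)^2$) and then invokes the induction hypothesis at the single point $z$, whereas you apply the induction hypothesis pointwise first and then use log-concavity of $g$; both orderings are valid and yield the identical bound.
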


\begin{proof}
The proof is by induction on $k$. Note that the conclusion holds trivially in the case $k = 1$, so let $k \ge 1$ and assume that the inequality holds for $k$; we will prove that it holds for $k+1$. Define $T_k := 2^{-k} \sum_{i = 1}^{2^k} X_{2^k + i}$, and note that $S_{k+1} = \frac{S_k + T_k}{2}$, and that $S_k$ and $T_k$ are independent and identically distributed random variables with support $K$. It follows that
$$f_{S_{k+1}}(z) = f_{\frac{S_k+T_k}{2}}(z) = 2^d \int_{y \in K} f_{S_k}(y) f_{S_k}(2z - y) \, dy$$
for every $z \in K$. Moreover, since $f_X$ and $f_Y$ are indicator functions on $K$, and $f_{S_k}$ is a log-concave function supported on $K$, 
$$\int_{y\in K} f_{S_k}(y) f_{S_k}(2z - y) \, dy \, \leq \, f_{S_k}(z)^2 \int_{y \in K}  f_X(y)  f_Y(2z - y) \, dy.$$
Now, by the induction hypothesis, we have 
$$f_{S_k}(z) \, \leq \, \Big( f_{\frac{X+Y}{2}}(z) \Big)^{2^k - 1},$$
and therefore, noting again that
$$\int_{y \in K}  f_X(y)  f_Y(2z - y) \, dy \,=\, 2^{-d} \cdot f_{\frac{X + Y}{2}}(z),$$
we obtain
$$f_{S_{k+1}}(z) \, \leq \, \Big( f_{\frac{X+Y}{2}}(z) \Big)^{2(2^k - 1)} f_{\frac{X + Y}{2}}(z) \, = \, \Big( f_{\frac{X+Y}{2}}(z) \Big)^{2^{k+1}-1},$$
for every $z \in K$, as required.
\end{proof}

We remark that in order to prove~\Cref{thm:DeltaKB} (and hence also Theorems~\ref{cor:hadwiger} and~\ref{main_thm}) we will only need the inequality
$$\| f_{S_k} \|_\infty \, \leq \, \Big( \big\| f_{\frac{X+Y}{2}} \big\|_\infty \Big)^{2^k-1}.$$
However, in the proof of Theorem~\ref{cor:ehrhart} we shall require the full strength of Lemma~\ref{iteratedsumcontrol}. 

Recall that, for any convex body $K \subset \R^d$, there exists an affine transformation that maps $K$ to a convex body $K'$ of volume $1$ such that $\Sigma_{K'} = L_K^2 I_d$, where $\Sigma_{K'} =\Ex[ X \otimes X ]$ is the covariance matrix of the uniform random variable $X \sim \cU(K')$, and $I_d$ is the identity matrix. For such a convex body $K'$, it is straightforward to calculate the covariance matrix of $S_k$.
  
}
  
\begin{lemma}\label{sumconcentration}
Let $K$ be a convex body, let $X \sim \cU(K)$, and suppose that $\Ex[ X \otimes X ] = L_K^2 I_d$. Then
$$\Ex\big[ S_k \otimes S_k \big] = 2^{-k} L_{K}^2 I_d$$
for every $k \in \N$.
\end{lemma}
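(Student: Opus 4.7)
The plan is a direct second-moment computation. Write $S_k = 2^{-k} \sum_{i=1}^{2^k} X_i$, so that by bilinearity of the tensor product,
\[
\Ex\big[ S_k \otimes S_k \big] \,=\, 2^{-2k} \sum_{i=1}^{2^k} \sum_{j=1}^{2^k} \Ex\big[ X_i \otimes X_j \big].
\]
For the diagonal terms $i = j$, the hypothesis gives $\Ex[X_i \otimes X_i] = L_K^2 I_d$, contributing $2^k$ copies of $L_K^2 I_d$. For the off-diagonal terms I would invoke independence of $X_i$ and $X_j$ to factor $\Ex[X_i \otimes X_j] = \Ex[X_i] \otimes \Ex[X_j]$.

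So the only nontrivial step is to observe that $\Ex[X] = 0$. This is implicit in the standing assumption that $\Sigma_K = \Ex[X \otimes X]$ is the covariance matrix of $X$ (as stated in the isotropic-constant definition in the introduction): the two expressions agree precisely when the body is centred at the origin, which in this setup we may arrange by translating $K$ without changing $L_K$. Once $\Ex[X_i] = 0$ is in hand, every off-diagonal term vanishes, leaving
\[
\Ex\big[ S_k \otimes S_k \big] \,=\, 2^{-2k} \cdot 2^k \cdot L_K^2 I_d \,=\, 2^{-k} L_K^2 I_d,
\]
as required.

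There is no genuine obstacle here; the statement is a one-line consequence of independence and bilinearity, and the only thing to be mindful of is the (standard) convention that $\Sigma_K$ denotes the \emph{centred} second moment, so that one should either assume $K$ is centred or note that the claim is invariant under a translation that centres it. I would present the argument in a couple of lines with no induction.
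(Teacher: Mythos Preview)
Your argument is correct and matches the paper's proof essentially line for line: expand $S_k\otimes S_k$ by bilinearity, kill the off-diagonal terms using independence together with $\Ex[X]=0$, and sum the $2^k$ diagonal contributions of $L_K^2 I_d$. Your explicit remark that $\Ex[X]=0$ is needed (and is implicit in the convention $\Sigma_K=\Ex[X\otimes X]$ together with the centring performed before the lemma is applied) is a helpful clarification the paper leaves tacit.
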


\begin{proof}
Since $S_k = 2^{-k} \sum_{i = 1}^{2^k} X_i$ and the $X_i$ are uniform and independent, it follows that
$$\Ex\big[ S_k \otimes S_k \big]  = \frac{1}{2^{2k}}\sum_{i,j = 1}^{2^k} \Ex\big[ X_i \otimes X_j \big] \, = \, \frac{1}{2^{2k}} \sum_{i = 1}^{2^k} \Ex\big[ X_i \otimes X_i \big] = 2^{-k} L_{K}^2 I_d,$$
as claimed.
\end{proof}

We are now ready to prove~\Cref{thm:DeltaKB}.

\begin{proof}[Proof of~\Cref{thm:DeltaKB}]
By applying an affine transformation, we may assume that $K$ has volume $1$ and is centred at the origin, and that $\Ex[ X \otimes X ] = L_K^2 I_d$, where $X \sim \cU(K)$. Fix $k \in \N$ such that
$$2^{15} L_{K}^2 \le 2^{k} \le 2^{16} L_{K}^2,$$ 
set $R := 2^{-7} \sqrt{d}$, and observe that, by Markov's inequality and~\Cref{sumconcentration}, we have 
\[
\Pr\big( \| S_k \|_2 \geq R \big) \, \leq \, \frac{2^{14}}{d} \cdot \Ex\big[ \| S_k \|_2^2 \big] \, = \, \frac{2^{14}}{d} \cdot \sum_{i = 1}^{d} 2^{-k} L_{K}^2 \, = \, \frac{2^{14} L_K^2}{2^k} \, \leq \, \frac{1}{2}.
\]
Moreover, bounding $\Pr\big( \| X \|_2 \le R \big)$ simply by the volume of the ball of radius $R$, we obtain
$$\Pr\big( \| X \|_2 \le R \big) \, \le \, \frac{\pi^{d/2} R^d}{\Gamma(\frac{d}{2}+1)} \, \le \, \bigg( \frac{2e\pi R^2}{d} \bigg)^{d/2} \, \le \, e^{-2d-1}.$$
Combining these two bounds, we deduce that 
\begin{equation}\label{eq:fSk:lowerbound}
\| f_{S_k} \|_\infty \, \ge \, \frac{\Pr\big( \| S_k \|_2 \le R \big)}{\Pr\big( \| X \|_2 \le R \big)} \, \ge \, \frac{e^{2d+1}}{2} \, \ge \, e^{2d}.
\end{equation}
Now, by Lemma~\ref{iteratedsumcontrol}, it follows that 
$$\big\| f_{\frac{X+Y}{2}} \big\|_\infty \, \geq \, \big( \| f_{S_k} \|_\infty \big)^{1/(2^{k}-1)} \, \geq \, e^{d / 2^{k-1}},$$
and hence, by Lemma~\ref{lem:XY:obs} and since $2^{k} \leq 2^{16} L_{K}^2$, we obtain
$$\Delta_{KB}(K) \, \geq \, 2^{-d} \cdot \big\| f_{\frac{X+Y}{2}} \big\|_\infty \, \geq \, \exp\bigg( \frac{d}{2^{15} L_K^2} \bigg) \cdot 2^{-d},$$
as required.
\end{proof}

We remark that the constant $2^{-15}$ in~\Cref{thm:DeltaKB} could be improved somewhat by taking $R$ a little larger (and thus $k$ a little smaller); however, we shall need~\eqref{eq:fSk:lowerbound} again in Section~\ref{sec:Ehr}, and we chose the constants in the proof above with the application there in mind. 


\section{Hadwiger's conjecture}\label{sec:Had}

In this section we will deduce Theorems~\ref{cor:hadwiger} and~\ref{main_thm} from~\Cref{thm:DeltaKB}. We~begin with the proof of~\Cref{main_thm}, for which we will need the following asymmetric variant of $N(K)$: given convex bodies $A$ and $B$ in $\R^d$, define
$$ N(A,B) = \min\Big\{ N \in \N \,:\, \exists\, x_1,\ldots, x_N \in \R^d \,\text{ such that }\, A \subset \, \bigcup_{i=1}^N \big( x_i + \inte(B) \big) \Big\}.$$
We will use the following classical fact (see~\cite{RZ} or~\cite[Corollary~3.5]{N18}), which follows from Rogers' bound~\cite{R57} on the density of coverings of $\R^d$ with translates of convex bodies. 

\begin{lemma}\label{lem_main}
If $A,B \subset \R^d$ are convex bodies, then
$$N(A,B) \le O\big( d \log d \big) \cdot \frac{|A-B|}{|B|}.$$
\end{lemma}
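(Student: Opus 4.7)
The plan is to deduce the lemma from Rogers' theorem~\cite{R57} on the minimum density of coverings of $\R^d$ by translates of a convex body, combined with a translation-averaging argument that restricts the global covering to the part of space overlapping $A$.

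First, I would invoke Rogers' theorem to obtain a periodic point set $X \subset \R^d$ --- periodic with respect to some lattice $\Lambda$ with fundamental domain $F$ --- such that $\bigcup_{x \in X}(x + B) = \R^d$ and the covering density
$$\theta \, := \, \frac{|X \cap F| \cdot |B|}{|F|} \, = \, O(d \log d).$$
The key geometric observation is that if $(x + B) \cap A \neq \emptyset$ then $x \in A - B$. Hence the sub-collection $\{\, x + B : x \in X \cap (A - B)\,\}$ already covers $A$, which gives
$$N(A,B) \, \le \, \bigl| X \cap (A - B) \bigr|.$$

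Next I would bound this cardinality by averaging over shifts. For every $t \in \R^d$ the set $X + t$ is again a periodic covering of $\R^d$ with the same density $\theta$, and since $\{F + \lambda\}_{\lambda \in \Lambda}$ tiles $\R^d$, unfolding the sum over $X$ into a sum over $X \cap F$ and $\Lambda$ yields
$$\mathbb{E}_{t \in F}\bigl[\,|(X + t) \cap (A - B)|\,\bigr] \, = \, \frac{|X \cap F| \cdot |A - B|}{|F|} \, = \, \theta \cdot \frac{|A - B|}{|B|}.$$
Consequently there exists $t^\ast \in F$ with $|(X + t^\ast) \cap (A - B)| \le \theta \cdot |A-B|/|B| = O(d \log d) \cdot |A-B|/|B|$, and using the shifted covering $X + t^\ast$ in place of $X$ supplies the required translates.

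Finally, to pass from covering by translates of $B$ to covering by translates of $\inte(B)$, I would run the whole argument with $(1-\eps)B$ in place of $B$ (after translating $B$ so that $0 \in \inte(B)$, which is harmless). Each translate $x + (1-\eps)B$ is then contained in $x + \inte(B)$, and $|A - (1-\eps)B|/|(1-\eps)B| \to |A-B|/|B|$ as $\eps \to 0^+$, so the resulting $(1 + o_\eps(1))$ factor is absorbed into the $O(d\log d)$ constant. The only substantive ingredient is Rogers' theorem itself; given that the lemma is labelled classical in the excerpt, I do not expect a genuine obstacle --- the remaining steps are a routine averaging and a standard $\eps$-approximation.
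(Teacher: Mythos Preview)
Your argument is correct and is essentially the standard derivation of this bound from Rogers' density theorem. The paper itself does not supply a proof of this lemma: it is quoted as a classical fact with references to~\cite{RZ} and~\cite[Corollary~3.5]{N18}, and the argument found in those sources is precisely the Rogers-covering-plus-averaging computation (with the $\eps$-shrinking to pass to interiors) that you outline.
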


We are now ready to deduce Theorem~\ref{main_thm} from Theorem~\ref{thm:DeltaKB}.

\begin{proof}[Proof of~\Cref{main_thm}]
By~\Cref{thm:DeltaKB}, there exists $x \in \R^d$ such that 
\begin{equation}\label{eq:KB:app}
\frac{|K \cap (x - K)|}{|K|} \, \geq \, \exp\bigg( \frac{d}{2^{15} L_K^2} \bigg) \cdot 2^{-d}.
\end{equation}
Set $S := K \cap (x - K)$, and note that
$$N(K) \leq N(K,S) \qquad \text{and} \qquad |K - S| \le |K + K| = 2^d \cdot |K|,$$
since $S \subset K$ and $S \subset x - K$, respectively. It therefore follows from~\Cref{lem_main} that
$$N(K) \leq N(K,S) \le O( d \log d ) \cdot \frac{|K-S|}{|S|} \, \le \, O( d \log d ) \cdot 2^d \cdot \frac{|K|}{|S|},$$
and hence, by~\eqref{eq:KB:app}, we obtain
$$N(K) \le O( d \log d ) \cdot \exp\bigg( - \frac{d}{2^{15} L_K^2} \bigg) \cdot 4^d \, = \, \exp\bigg( - \frac{\Omega(d)}{L_K^2} \bigg) \cdot 4^d$$
as $d \to \infty$, as required.
\end{proof}

In order to deduce Theorem~\ref{cor:hadwiger} and Theorem~\ref{cor:ehrhart}, we will need the following theorem of Klartag and Lehec~\cite{KL}.


\begin{theorem}
\label{thm:Bourgain:polylog}
If $K \subset \R^d$ is a convex body, then 
\[
L_K \, = \, O(\log d)^4.
\] 
\end{theorem}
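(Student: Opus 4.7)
The plan is to treat this as a deep statement whose proof necessarily passes through the thin-shell constant, and to follow the stochastic localization program pioneered by Eldan and pushed through by Chen and then Klartag–Lehec. No purely elementary route to a polylogarithmic bound on $L_d$ is currently known, so the structure of the argument is essentially forced.

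Step one is reduction. By the Milman--Pajor equivalence one has $L_K = \Theta(L_d)$, and by the Eldan--Klartag inequality one has $L_d = O(\sigma_d)$, where
\[
\sigma_d^2 \, := \, \sup_\mu \, \Ex\big[ \big( \|X\|_2 - \sqrt{d} \big)^2 \big]
\]
with the supremum over isotropic log-concave probability measures $\mu$ on $\R^d$ and $X \sim \mu$. It therefore suffices to establish a polylogarithmic bound on the thin-shell constant, say $\sigma_d \leq C (\log d)^2$, which yields $L_K = O(\log d)^4$ after chasing the two reductions.

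Step two is to set up Eldan's stochastic localization for an arbitrary isotropic log-concave $\mu$. Introduce the tilted measure-valued process $(\mu_t)_{t \geq 0}$ defined by the It\^o stochastic differential equation
\[
d\mu_t(x) \, = \, \mu_t(x) \, \big\langle x - a_t, \, dB_t \big\rangle, \qquad \mu_0 = \mu,
\]
where $a_t$ is the barycenter of $\mu_t$ and $B_t$ is a standard $d$-dimensional Brownian motion. Each $\mu_t$ remains log-concave and is a Gaussian tilt of $\mu$, so $\Ex[\mu_t] = \mu$, while $\mu_t$ concentrates as $t \to \infty$. The covariance matrix $A_t$ of $\mu_t$ satisfies a matrix-valued SDE whose drift is controlled by third moments of $\mu_t$; crucially, these third moments can be dominated, via Cauchy--Schwarz and log-concavity, by a power of the thin-shell constant of the running measure.

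Step three is the bootstrap. The analysis reduces to a self-improving differential inequality of the shape
\[
\frac{d}{dt} \, \|A_t\|_{\mathrm{op}} \, \leq \, C \cdot \psi(t)^2 \cdot \|A_t\|_{\mathrm{op}}^2,
\]
where $\psi(t)$ encodes the best available thin-shell estimate for tilts of $\mu$ up to time $t$. Integrating up to a carefully chosen stopping time and comparing the resulting growth to the isotropic normalization converts any a priori bound $\sigma_d \leq B$ into a strictly better bound $\sigma_d \leq F(B, \log d)$; iterating this recursion a bounded number of times gives $\sigma_d = O(\log d)^2$, which is the Klartag--Lehec improvement of Chen's $d^{o(1)}$ breakthrough, and then translates to Theorem~\ref{thm:Bourgain:polylog} by Step one.

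The main obstacle is the uniform rather than in-expectation control of $\|A_t\|_{\mathrm{op}}$: rare trajectories on which this operator norm inflates quickly can ruin the bootstrap entirely. Overcoming this requires sharp tail estimates on the eigenvalues of $A_t$ along the localization, together with a judicious choice of stopping time that truncates the process before an unlikely blow-up happens but keeps enough running time to extract the improved thin-shell bound. This is precisely the technical heart of the Klartag--Lehec analysis, and it is where I would expect to spend the vast majority of the effort.
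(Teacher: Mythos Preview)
The paper does not prove this theorem at all: it is quoted verbatim as the main result of Klartag and Lehec~\cite{KL} and used as a black box. So there is no ``paper's own proof'' to compare against; the intended answer here is simply a citation.

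Your proposal instead sketches the actual Klartag--Lehec argument, and at the level of a roadmap it is broadly accurate: reduce slicing to thin-shell via Milman--Pajor and Eldan--Klartag, run Eldan's stochastic localization on an isotropic log-concave measure, control the evolution of the covariance operator $A_t$ via a differential inequality, and bootstrap an a~priori thin-shell bound into a polylogarithmic one. You are also right that the hard part is the pathwise (not merely in-expectation) control of $\|A_t\|_{\mathrm{op}}$ and the choice of stopping time, and you are honest that this is where all the work lies. As a plan this is fine, but it is a plan for rewriting a thirty-page paper, not a proof; for the purposes of the present note the correct move is to cite~\cite{KL}.

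One small slip worth flagging: your exponent bookkeeping in Step one does not add up. Both reductions you invoke are linear ($L_K = \Theta(L_d)$ and $L_d = O(\sigma_d)$), so a bound $\sigma_d \le C(\log d)^2$ would give $L_K = O(\log d)^2$, not $O(\log d)^4$. The exponent $4$ in the cited theorem comes from the internal structure of the Klartag--Lehec bootstrap (via the KLS/Poincar\'e constant), not from squaring in the reduction chain.
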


Theorem~\ref{cor:hadwiger} now follows immediately. 

\begin{proof}[Proof of Theorem~\ref{cor:hadwiger}]
By Theorems~\ref{main_thm} and~\ref{thm:Bourgain:polylog}, it follows that
$$N(K) \, \leq \, \exp\bigg( - \frac{\Omega(d)}{L_K^2} \bigg) \cdot 4^d \, \le \, \exp\bigg( - \Omega\bigg( \frac{d}{(\log d)^8} \bigg) \bigg) \cdot 4^d$$
as $d \to \infty$, as required.
\end{proof}

\section{Ehrhart’s conjecture}\label{sec:Ehr}

In order to prove Theorem~\ref{cor:ehrhart}, we will need the following variant of Theorem~\ref{thm:DeltaKB}.

\begin{theorem}\label{thm:centred}
If $K \subset \R^d$ is a convex body centred at the origin, then 
\[
\Delta_{KB}(K) \, \geq \, \frac{| K \cap (-K)|}{|K|} \, \geq \, \exp\bigg( \frac{d}{2^{16} L_K^2} \bigg) \cdot 2^{-d}.
\] 
\end{theorem}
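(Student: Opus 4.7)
My plan is to follow the strategy used in the proof of Theorem~\ref{thm:DeltaKB}, but instead of taking $L^\infty$ norms of densities I will evaluate them at the specific point $z = 0$. The first inequality in the statement is immediate from the definition of $\Delta_{KB}(K)$ by taking $x = 0$, so the whole content lies in the second inequality. Lemma~\ref{lem:XY:obs} at $z = 0$ gives the identity $f_{\frac{X+Y}{2}}(0) = 2^d \cdot |K \cap (-K)|$ (after normalizing $|K| = 1$), so it will suffice to prove $f_{\frac{X+Y}{2}}(0) \geq \exp\!\big(d/(2^{16} L_K^2)\big)$. By a linear change of coordinates (which preserves the ratio $|K\cap(-K)|/|K|$, the isotropic constant $L_K$, and the centred-at-origin hypothesis), I may also assume that $\E[X \otimes X] = L_K^2 I_d$ for $X \sim \cU(K)$, exactly as in the earlier proof.

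I would then take the same parameter $k$ with $2^{15} L_K^2 \leq 2^k \leq 2^{16} L_K^2$ as in the proof of Theorem~\ref{thm:DeltaKB} and apply Lemma~\ref{iteratedsumcontrol} at $z = 0$, which reduces the whole task to showing that $f_{S_k}(0) \geq e^d$. The natural candidate is the bound $\|f_{S_k}\|_\infty \geq e^{2d}$ that was already established in~\eqref{eq:fSk:lowerbound} during the proof of Theorem~\ref{thm:DeltaKB}, but this is a bound on the supremum rather than on the value of $f_{S_k}$ at the origin.

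Bridging this gap is the main obstacle, and it is exactly the place where the hypothesis that $K$ is centred at the origin is used. Since $K$ is centred, $\E[S_k] = 0$, and $f_{S_k}$ is log-concave by Pr\'ekopa--Leindler, so I would invoke Fradelizi's classical inequality: for any log-concave density $g$ on $\R^d$ with barycentre $x_0$, one has $g(x_0) \geq e^{-d} \|g\|_\infty$. Applied to $f_{S_k}$ with $x_0 = 0$, this gives
\[
f_{S_k}(0) \,\geq\, e^{-d} \|f_{S_k}\|_\infty \,\geq\, e^{-d} \cdot e^{2d} \,=\, e^d,
\]
losing exactly one factor of $e^d$ relative to the argument for Theorem~\ref{thm:DeltaKB}. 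This loss is precisely what degrades the constant $2^{15}$ to $2^{16}$ in the exponent. Plugging into Lemma~\ref{iteratedsumcontrol} yields
\[
f_{\frac{X+Y}{2}}(0) \,\geq\, f_{S_k}(0)^{1/(2^k - 1)} \,\geq\, e^{d/(2^k-1)} \,\geq\, e^{d/(2^{16} L_K^2)},
\]
and combining this with Lemma~\ref{lem:XY:obs} at $z = 0$ completes the proof.
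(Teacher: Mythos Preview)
Your proposal is correct and follows essentially the same approach as the paper: the paper also evaluates at $z=0$, invokes Fradelizi's inequality (stated there as Theorem~\ref{thm:centre}) to pass from $\|f_{S_k}\|_\infty \ge e^{2d}$ to $f_{S_k}(0) \ge e^d$, and then applies Lemma~\ref{iteratedsumcontrol} at $z=0$ together with Lemma~\ref{lem:XY:obs}. Your explanation of why the constant degrades from $2^{15}$ to $2^{16}$ is exactly the reason, and your normalization step and the observation that the first inequality is trivial are both fine.
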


We will deduce Theorem~\ref{thm:centred} from the proof of Theorem~\ref{thm:DeltaKB}, together with the following bound on the value of a log-concave function at its centre of mass~\cite[Theorem~4]{F97}.

\begin{theorem}
\label{thm:centre}
If $f \colon \R^d \to \R_+$ is a log-concave function, then
$$f(y) \ge e^{-d} \cdot \| f \|_\infty,$$
where $y = \int_{x \in \R^d} f(x) \cdot x \, dx$ is the centre of mass of $f$. 
\end{theorem}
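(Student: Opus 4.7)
The plan is to reduce the $d$-dimensional inequality to a weighted one-dimensional extremal problem and then solve that 1D problem. First, by translating we may assume the centre of mass $y$ equals the origin and by homogeneity we may assume $\int f = 1$, so that $f$ is a log-concave probability density with centroid $0$ and we must show $f(0) \ge e^{-d}\|f\|_\infty$. After a standard mollification (convolving with a small Gaussian and passing to the limit), we may assume $\|f\|_\infty = f(x^*)$ is attained at some $x^* \in \R^d$; if $x^* = 0$ there is nothing to prove, so set $u = x^*/\|x^*\|$.

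Next, I would reduce to a one-dimensional statement. The level sets $K_s := \{x : f(x) \ge s\}$ are convex by log-concavity, so $\vol_d(K_s)$ is governed by Brunn--Minkowski. Passing to polar coordinates centred at $0$, the radial profile $t \mapsto f(tv)$ is log-concave for each direction $v \in S^{d-1}$, and the centroid condition $\int xf(x)\,dx = 0$ can be rewritten as a first-moment condition \emph{weighted by} $t^{d-1}$ on a 1D log-concave profile $\phi : \R \to \R_+$ derived from $f$ along a needle through the origin in direction $u$. The inequality we want then becomes the following 1D claim: for any log-concave $\phi$ whose $t^{d-1}\,dt$-moment satisfies the centroid condition arising from the reduction, one has $\phi(0) \ge e^{-d}\|\phi\|_\infty$. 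The appearance of the weight $t^{d-1}$ is precisely what turns the 1D constant $e^{-1}$ into the $d$-dimensional constant $e^{-d}$.

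Finally, I would solve this weighted 1D extremal problem. A standard replacement argument shows that the extremal $\phi$ must be a pure exponential $\phi(t) = ae^{bt}$ on an interval: by log-concavity (so $\log\phi$ lies above its chords), any competitor can be replaced by an affine-log extension on each side of its peak without decreasing the ratio $\|\phi\|_\infty / \phi(0)$ and without shifting the weighted centroid outward. For such exponentials the centroid condition is an explicit moment identity and a direct calculation yields $\phi(0)/\|\phi\|_\infty \ge e^{-d}$, with equality approached by an exponential supported on a long interval with one endpoint near the origin.

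The main obstacle is the reduction in the second step: arranging the polar/needle decomposition so that the weight $t^{d-1}$ emerges correctly and the centroid condition of $f$ transfers faithfully to the 1D profile $\phi$. The key technical ingredient is Brunn--Minkowski (equivalently Prékopa--Leindler) applied to the convex level sets $\{f \ge s\}$, which provides the concavity of the relevant ``radial mass'' function needed to match the $d$-dimensional statement to a 1-dimensional one with the correct dimensional scaling.
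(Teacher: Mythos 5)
Your step 1 (normalisation, mollification so the supremum is attained) is fine, but the heart of your argument --- step 2 --- has a genuine gap, and as stated it fails. The centroid condition $\int_{\R^d} x f(x)\,dx=0$ is a vector constraint coupling \emph{all} directions; it does not descend to a first-moment condition, weighted by $t^{d-1}$ or otherwise, on the single radial profile $\phi(t)=f(tu)$ along the needle through the origin and the maximiser. You can see this on the (essentially extremal) example $f(x)=e^{-d}e^{-(x_1+\cdots+x_d)}$ on $\{x_i\ge -1\}$, which has centroid $0$, $\|f\|_\infty=1$ attained at $x^*=(-1,\dots,-1)$, and $f(0)=e^{-d}$: here the profile along $u=x^*/\|x^*\|$ is $\phi(t)=e^{-d+t\sqrt d}$ on $(-\infty,\sqrt d\,]$, and $\int t\,|t|^{d-1}\phi(t)\,dt$ is strongly positive rather than zero, so the very function that should be extremal does not satisfy the transferred constraint. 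More generally, the scalar condition $\int\langle x,u\rangle f(x)\,dx=0$ is a statement about a \emph{marginal} of $f$, and Brunn--Minkowski/Pr\'ekopa applied to the level sets controls marginals and section volumes, not the values $f(tu)$ on a line; nothing in your outline bridges that gap. Since the inequality to be proved is pointwise ($f(y)$ versus $\sup f$) rather than an inequality between integrals, standard needle/localization reductions (which do produce weights like $t^{d-1}$) do not apply off the shelf either. Consequently the 1D extremal problem you propose to solve in step 3 has not actually been identified, and the claimed computation giving $e^{-d}$ cannot be checked.

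For comparison: the paper does not prove this statement at all --- it is quoted from Fradelizi~\cite[Theorem~4]{F97} --- so there is no internal proof to match. If you want a self-contained argument, a much shorter route avoids any needle reduction: normalise $\int f=1$ and apply Jensen's inequality to the concave function $\log f$ under the probability measure $f(x)\,dx$ to get $\log f(y)\ge \int f\log f$; then note that log-concavity gives $f(\lambda x+(1-\lambda)x^*)\ge f(x)^\lambda\|f\|_\infty^{1-\lambda}$, whence $\int f^\lambda\,dx\le \lambda^{-d}\|f\|_\infty^{\lambda-1}$ for $\lambda\in(0,1]$, and differentiating this at $\lambda=1$ yields $\int f\log f\ge \log\|f\|_\infty-d$. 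Combining the two displays gives $f(y)\ge e^{-d}\|f\|_\infty$, with equality (in the limit) exactly for products of one-sided exponentials, the example above.
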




Theorem~\ref{thm:centred} now follows from Lemma~\ref{iteratedsumcontrol}, as before.

\begin{proof}[Proof of~\Cref{thm:centred}]
Recall that the function $f_{S_k}$ is log-concave, where $S_k$ is the random variable defined in~\eqref{def:Sk}, and note that, since $K$ is centred at the origin, the centre of mass of $f_{S_k}$ is also the origin. By Theorem~\ref{thm:centre} and~\eqref{eq:fSk:lowerbound}, it follows that
$$f_{S_k}(0) \ge e^{-d} \cdot \| f_{S_k} \|_\infty \, \ge \, e^d.$$
Now, by Lemma~\ref{iteratedsumcontrol}, it follows that 
$$f_{\frac{X+Y}{2}}(0) \, \ge \, \big( f_{S_k}(0) \big)^{1/(2^{k}-1)} \, \geq \, e^{d / 2^k},$$
and hence, by Lemma~\ref{lem:XY:obs} and since $2^{k} \leq 2^{16} L_{K}^2$, we obtain
$$\frac{|K \cap (-K)|}{|K|} \, = \, 2^{-d} \cdot f_{\frac{X+Y}{2}}(0) \, \geq \, \exp\bigg( \frac{d}{2^{16} L_K^2} \bigg) \cdot 2^{-d},$$
as claimed.
\end{proof}

Finally, to deduce Theorem~\ref{cor:ehrhart}, recall that, by Minkowski's theorem, every convex body $K \subset \R^d$ such that $K = -K$ and $K \cap \Z^d = \{0\}$ has volume at most $2^d$. 

\begin{proof}[Proof of Theorem~\ref{cor:ehrhart}]
By Minkowski's inequality and Theorems~\ref{thm:Bourgain:polylog} and~\ref{thm:centred}, we have 
$$\frac{2^d}{|K|} \, \ge \, \frac{|K \cap (-K)|}{|K|} \, \ge \, \exp\bigg( \frac{d}{2^{16} L_K^2} \bigg) \cdot 2^{-d} \, \ge \, \exp\bigg( \Omega\bigg( \frac{d}{(\log d)^8} \bigg) \bigg) \cdot 2^{-d}$$
as $d \to \infty$, as required.
\end{proof}

\section*{Acknowledgements}

This research began while the second author was visiting Rio de Janeiro, and the authors are grateful to IMPA for providing an excellent working environment. They would also like to thank Grigoris Paouris for pointing them to reference~\cite{F97}.

{\setstretch{1.25}

}

\begin{thebibliography}{9}

\bibitem{ABP} M.~Anttila, K.~Ball and I.~Perissinaki, 
The central limit problem for convex bodies, 
\emph{Trans. Amer. Math.~Soc.}, \bf 355 \rm (2003), 4723--4735.


\bibitem{BK} S.G.~Bobkov and A.~Koldobsky, 
On the Central Limit Property of Convex Bodies, 
In:~Geometric aspects of functional analysis, Lecture Notes in Math., Vol. 1807, pp. 44--52, Springer, Berlin, Heidelberg, 2003.

\bibitem{B86} J.~Bourgain, 
On high-dimensional maximal functions associated to convex bodies, 
\emph{Amer. J. Math.}, \bf 108, \rm (1986), 1467--1476.

\bibitem{B87} J.~Bourgain, 
Geometry of Banach spaces and harmonic analysis, 
In:~Proceedings of the International Congress of Mathematicians, Vol.~1, Amer. Math. Soc., 1986.

\bibitem{B91} J.~Bourgain, 
On the distribution of polynomials on high dimensional convex sets, 
In:~Geometric aspects of functional analysis, Lecture Notes in Math., Vol. 1469, pp. 127--137, Springer, Berlin, Heidelberg, 1991.

\bibitem{B02} J.~Bourgain, 
On the isotropy-constant problem for ``Psi-2"-bodies, 
In:~Geometric aspects of functional analysis, Lecture Notes in Math., Vol. 1807, pp. 114-121, Springer, Berlin, Heidelberg, 2003.

\bibitem{C21} Y.~Chen, 
An almost constant lower bound of the isoperimetric coefficient in the KLS conjecture, 
\emph{Geom.~Funct.~Anal. (GAFA)}, \bf 31 \rm (2021), 34--61.

\bibitem{E13} R.~Eldan, 
Thin shell implies spectral gap via a stochastic localization scheme, 
\emph{Geom. Funct. Anal. (GAFA)}, \bf 23, \rm (2013), 55--68.

\bibitem{EK} R.~Eldan and B.~Klartag, 
Approximately gaussian marginals and the hyperplane conjecture, 
In: Concentration, functional inequalities and isoperimetry, Contemporary Mathematics, Vol.~545, pp.~55--68, Amer.~Math.~Soc., 2011.

\bibitem{E64} E.~Ehrhart, 
Une généralisation probable du théor\`eme fondamental de Minkowski, 
\emph{C.~R.~Acad.~Sci. Paris}, \bf 258 \rm (1964), 4885--4887.

\bibitem{F97} M.~Fradelizi, 
Sections of convex bodies through their centroid, 
\emph{Arch. Math.}, \bf 69 \rm (1997), 515--522.

\bibitem{GM} O.~Guédon and E.~Milman, 
Interpolating thin-shell and sharp large-deviation estimates for isotropic log-concave measures, 
\emph{Geom.~Funct.~Anal. (GAFA)}, \bf 21 \rm (2011), 1043--1068.

\bibitem{H57} H.~Hadwiger, 
Ungeloste Problems Nr. 20, 
\emph{Elem.~Math.}, \bf 12 \rm (1957), 121. 

\bibitem{HHH} M.~Henk, M.~Henze and M.A.~Hernández Cifre, 
Variations of Minkowski's theorem on successive minima, 
\emph{Forum Math.}, \bf 28 \rm (2016), 311--325.

\bibitem{HSTV} H.~Huang, B.A.~Slomka, T.~Tkocz and B.~Vritsiou, 
Improved bounds for Hadwiger's covering problem via thin-shell estimates, 
\emph{J.~Europ.~Math.~Soc.}, \bf 24 \rm (2021), 1431--1448.

\bibitem{KLS} R.~Kannan, L.~Lovász and M.~Simonovits, 
Isoperimetric problems for convex bodies and a localization lemma, 
\emph{Discr. Comput. Geom.}, \bf 13 \rm(1995), 541--559.

\bibitem{KM} B.~Klartag and V.~Milman,  
The slicing problem by Bourgain, 
In:~Analysis at Large, A Collection of Articles in Memory of Jean Bourgain, to appear. 

\bibitem{K06} B.~Klartag, 
On convex perturbations with a bounded isotropic constant, 
\emph{Geom.~Funct.~Anal. (GAFA)}, \bf 16 \rm (2006), 1274--1290.

\bibitem{KL} B.~Klartag and J.~Lehec, 
Bourgain’s slicing problem and KLS isoperimetry up to polylog, 
arXiv:2203.15551

\bibitem{LV} Y.T.~Lee and S.~Vempala,  
Eldan's Stochastic Localization and the KLS Conjecture:~Isoperimetry, Concentration and Mixing, \emph{IEEE Computer Soc.}, (2017), 998--1007.

\bibitem{L55} F.W.~Levi,   
\"Uberdeckung eines eibereiches durch parallelverschiebung seines offenen kerns, 
\emph{Arch.~Math.}, \bf 6 \rm (1955), 369--370.

\bibitem{MP} V.D.~Milman and A.~Pajor, 
Isotropic position and inertia ellipsoids and zonoids of the unit ball of a normed $n$-dimensional space, 
Geometric aspects of functional analysis, Lecture Notes in Math., Vol.~1376, pp.~64--104, Springer, 1989.

\bibitem{MP00} V.D.~Milman and A.~Pajor, 
Entropy and Asymptotic Geometry of Non-Symmetric Convex Bodies, 
\emph{Adv.~Math.}, \bf 152 \rm (2000), 314--335.

\bibitem{N18} M. Naszódi, 
Flavors of translative coverings, 
In:~New Trends in Intuitive Geometry, pp. 335--358, Springer Verlag, New York, Berlin, Heidelberg, 2018.

\bibitem{R57} C.A.~Rogers, 
A note on coverings, 
\emph{Mathematika}, \bf 4 \rm (1957), 1--6.

\bibitem{RS} C.A.~Rogers and G.C.~Shephard, 
The difference body of a convex body,
\emph{Arch. Math.}, \bf 8 \rm (1957), 220--233.

\bibitem{RZ} C.A.~Rogers and C.~Zong,  
Covering convex bodies by translates of convex bodies, 
\emph{Mathematika}, \bf 44 \rm (1997), 215--218.


\end{thebibliography}

\end{document}